\documentclass[11pt]{article}
\usepackage{enumerate}
\usepackage{verbatim}
\usepackage[english]{babel}
\usepackage[utf8]{inputenc}
\usepackage{listings}
\usepackage{authblk}
\usepackage{geometry}
\usepackage{psfrag}
\usepackage{epic}
\usepackage{lipsum}
\usepackage{eepic}
\usepackage{color}
\usepackage{graphicx}
\usepackage{tikz}
\usepackage{tkz-graph}
\usetikzlibrary{positioning,chains,fit,shapes,calc,arrows.meta}
\usepackage[caption=false]{subfig}
\usepackage{indentfirst}
\usepackage{float}
\usepackage{amsmath}
\usepackage{amssymb}
\usepackage{amsthm}
\usepackage{hyperref}

\usepackage[normalem]{ulem}
\usepackage{indentfirst}

\newtheorem{theo}{Theorem}[section]
\newtheorem{theorem}[theo]{Theorem}

\newtheorem{corollary}[theo]{Corollary}

\newtheorem{lemma}[theo]{Lemma}

\newtheorem{conjecture}[theo]{Conjecture}

\newtheorem{question}{Question}

\newcommand{\ini}{\ensuremath{\textrm{ini}}}
\newcommand{\chil}{\ensuremath{\vec{\chi}}}

\newcommand{\sP}{\mathcal{P}}
\newcommand{\sS}{\mathcal{S}}
\newcommand{\sQ}{\mathcal{Q}}
\newcommand{\sC}{\mathcal{C}}
\newcommand{\sH}{\mathcal{H}}
\newcommand{\sR}{\mathcal{R}}

\title{Orthogonality between acyclic subdigraphs and \\ paths in digraphs}

\author[1]{Caroline A. de Paula Silva\thanks{Supported by FAPESP (grant 2022/03735-0).}}

\author[2]{Cândida Nunes da Silva\thanks{Supported by FAPESP (grant 2023/03167-5).}}

\author[1]{Orlando Lee\thanks{Supported by FAPESP (grants 2023/03167-5 and 2015/11937-9) and CNPq (grant 302596/2022-4).}}

\affil[1]{\footnotesize{Institute of Computing, University of Campinas, Campinas, São Paulo, Brazil} \newline \{caroline.silva, lee\}@ic.unicamp.br}
\affil[2]{\footnotesize{Department of Computing, Federal University of São Carlos, Sorocaba, São Paulo, Brazil} candida@ufscar.br}

\begin{document} 

\maketitle

\begin{abstract}
Let $D$ be a digraph. A collection of disjoint sets of vertices (respec., collection of disjoint subdigraphs) $\sH$ of $D$ and a vertex subset (or subdigraph) $Q$ of $D$ are orthogonal if every set (respec., subdigraph) $H \in \sH$ contains exactly one vertex of $Q$.
A well-known result of Gallai and Milgram shows that for every minimum path partition of a digraph there is a stable set orthogonal to it. Similarly, Gallai, Hasse, Roy and Vitaver independently proved that for every longest path of a digraph there is a vertex partition into stable sets (i.e, vertex-coloring) orthogonal to it.
Berge showed that no analogous statements hold when optimality is required for the stable set or the vertex coloring. In this paper, we show that this holds if we replace stable sets by induced acyclic subdigraphs.

In 1981, Linial proposed two generalizations of Gallai-Milgram and Gallai-Hasse-Roy-Vitaver results using a positive integer $k$ as a measure of optimality for the path partition and the coloring, respectively. These generalizations have led to two conjectures that remain open. Using the same strategy of replacing stable sets by induced acyclic subdigraphs, we prove relaxations of both conjectures. 
\end{abstract}

\section{Introduction}
We assume that the reader is familiar with common terminology of graphs and digraphs~\cite{ bang2008digraphs, bomu08}.
Let $D=(V, A)$ denote a digraph. The \emph{underlying graph} of $D$, denoted by $U(D)$, is the simple graph with vertex set $V(D)$ such that $u$ and $v$ are adjacent in $U(D)$ if and only if $(u, v) \in A(D)$ or $(v, u) \in A(D)$. 
A \emph{stable set} of a $D$ is a stable set of $U(D)$. We denote the size of a maximum stable set of $D$ by $\alpha(D)$. 
A \emph{path partition} of $D$ is a collection of disjoint paths that covers $V(D)$. The size of a minimum path partition of $D$ is denoted by $\pi(D)$.
We denote by $\ini(P)$ the initial vertex of a path $P$. Similarly, if $\sP$ is a collection of paths, we denote by $\ini(\sP)$ the set of initial vertices of the paths in $\sP$.
A collection of disjoint sets of vertices (respec., collection of disjoint subdigraphs) $\sH$ of $D$ and a vertex subset (or subdigraph) $Q$ of $D$ are orthogonal if every set (respec., subdigraph) $H \in \sH$ contains exactly one vertex of $Q$. We also say that $\sH$ is \emph{orthogonal to $Q$} and vice versa.

In 1960, Gallai and Milgram~\cite{GallaiMilgram1960} showed that for every minimum path partition of $D$ there is a stable set orthogonal to it.

\begin{theorem}[Gallai-Milgram Theorem]
\label{th:gallai-milgram}
Let $D$ be a digraph.
For every minimum path partition $\sP$ of $D$, there exists a stable set $S$ of $D$ such that $\sP$ and $S$ are orthogonal.
In particular, $\pi(D) \leq \alpha(D)$.
\end{theorem}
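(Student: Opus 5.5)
The plan is to prove the stronger, standard statement: for every path partition $\sP$ of $D$, either $\ini(\sP)$ is a stable set, or there is a path partition $\sP'$ with $|\sP'| = |\sP|-1$ and $\ini(\sP') \subsetneq \ini(\sP)$. If $\sP$ is a minimum path partition, the second alternative is impossible, so $\ini(\sP)$ is stable. Since each path of $\sP$ contains exactly one vertex of $\ini(\sP)$, the set $S=\ini(\sP)$ is a stable set orthogonal to $\sP$. The bound $\pi(D)\le\alpha(D)$ then follows immediately.

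I would prove the stronger statement by induction on $|V(D)|$; the case $|V(D)|=1$ is trivial. Let $\sP=\{P_1,\dots,P_k\}$ with $\ini(P_i)=x_i$, and suppose $\ini(\sP)$ is not stable, say $(x_i,x_j)\in A(D)$ with $i\neq j$. If $P_i$ consists only of $x_i$, prepend $x_i$ to $P_j$; this merges the two paths and gives $\sP'$ with $\ini(\sP')=\ini(\sP)\setminus\{x_j\}$.

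Otherwise let $x_i'$ be the successor of $x_i$ on $P_i$, and apply induction to $D'=D-x_i$ with the partition $\sQ$ obtained by replacing $P_i$ with $P_i-x_i$. Thus $\ini(\sQ)=(\ini(\sP)\setminus\{x_i\})\cup\{x_i'\}$. Because $x_j$ and $x_i'$ are distinct elements of $\ini(\sQ)$ and $\ini(\sQ)\ni x_j$, the set $\ini(\sQ)$ may or may not be stable, so there are two cases.
\begin{itemize}
\item If $\ini(\sQ)$ is not stable, induction gives a partition $\sQ'$ of $D'$ with $k-1$ paths and $\ini(\sQ')\subsetneq\ini(\sQ)$.
\begin{itemize}
\item If $x_i'\in\ini(\sQ')$, prepend $x_i$ to the path starting at $x_i'$.
\item If $x_i'\notin\ini(\sQ')$ but $x_j\in\ini(\sQ')$, prepend $x_i$ to the path starting at $x_j$, using the arc $(x_i,x_j)$.
\item If neither is in $\ini(\sQ')$, add $x_i$ as a trivial path.
\end{itemize}
\item If $\ini(\sQ)$ is stable, induction gives nothing. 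In that case, prepend $x_i$ to $P_j$ and make $P_i-x_i$ its own path. This does not reduce the count, so this case needs more care.
\end{itemize}

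The main obstacle is this counting. When $\ini(\sQ)$ is stable, and also in the third subcase above, the count is not reduced, so the straightforward induction fails. The fix I would try first is the classical one: remove $x_i$ from consideration and apply induction on $D - x_i$ only when doing so yields a reduction, and otherwise iterate on the other endpoint. A cleaner route is to strengthen the hypothesis to Gallai–Milgram's original form: if $\ini(\sP)$ is not stable, there exists $\sP'$ with $|\sP'|=|\sP|-1$ and $\ini(\sP')\subset\ini(\sP)$. One proves this by deleting $x_j$, not $x_i$. Apply induction to $D-x_j$ with $P_j$ replaced by $P_j-x_j$, whose new initial set is $I=(\ini(\sP)\setminus\{x_j\})\cup\{x_j'\}$.
\begin{itemize}
\item If $I$ is stable, the arc $(x_i,x_j)$ still lets us re-insert $x_j$ in front of $P_j - x_j$. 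If instead $P_j$ is trivial, we merge it with $P_i$ by appending $x_j$ after the end of $P_i$. This fails as stated, so the tail-end version requires the symmetric argument with terminal vertices; I would reverse arcs as needed.
\item If $I$ is not stable, induction yields $k-1$ paths in $D-x_j$ whose initial set is contained in $I$. If $x_j'$ is among the initial vertices, we prepend $x_j$ to that path. If not, $x_j$ must be placed in front of some initial vertex. Here the arc $(x_i,x_j)$ does not help directly, and one chooses the arc so that $x_j$ becomes a new initial vertex while $x_i$ drops out.
\end{itemize}
I expect this bookkeeping of which initial vertex disappears to be the crux.
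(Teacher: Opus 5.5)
The paper does not prove this theorem (it is quoted from Gallai and Milgram), so your argument has to stand on its own, and it does not: the strengthening you set out to prove is false. Take $D$ on $\{x,y,z\}$ with arcs $(y,x)$ and $(y,z)$, and $\sP=\{(x),(y,z)\}$. Both $x$ and $z$ are sinks, so no single path covers $D$, and $\sP$ is a minimum path partition. Yet $\ini(\sP)=\{x,y\}$ contains the arc $(y,x)$. So neither ``$\ini(\sP)$ is stable'' nor ``there is $\sP'$ with $|\sP|-1$ paths and $\ini(\sP')\subsetneq\ini(\sP)$'' holds. The orthogonal stable set here is $\{x,z\}$, not $\ini(\sP)$. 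Running your first argument on this example ($x_i=y$, $x_j=x$) lands exactly in your case ``$\ini(\sQ)$ is stable'', with $\sQ=\{(x),(z)\}$. The obstacle you hit is therefore genuine and cannot be removed by bookkeeping. Your ``cleaner route'' aims at the same false dichotomy, and both of its cases are left unfinished anyway.

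The missing idea is to change the invariant, not the vertex you delete. Call $\sP$ \emph{$\ini$-minimal} if no path partition $\sP''$ of $D$ satisfies $\ini(\sP'')\subsetneq\ini(\sP)$. A minimum path partition is $\ini$-minimal, since $|\ini(\sP'')|=|\sP''|$. Prove by induction on $|V(D)|$ that every $\ini$-minimal $\sP$ has \emph{some} orthogonal stable set.

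If $\ini(\sP)$ is stable, take it. Otherwise your first case analysis goes through unchanged. First, $P_i$ is nontrivial, since otherwise merging $(x_i)$ into $P_j$ gives initial set $\ini(\sP)\setminus\{x_j\}$. Next, take any partition $\sQ'$ of $D-x_i$ with $\ini(\sQ')\subsetneq\ini(\sQ)$. Your three subcases each yield a path partition of $D$ whose initial set is a proper subset of $\ini(\sP)$; in the second and third subcases it is contained in $\ini(\sP)\setminus\{x_j\}$. So the number of paths never needs to drop by exactly one, and $\sQ$ is $\ini$-minimal in $D-x_i$.

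Induction then gives a stable set of $D-x_i$ orthogonal to $\sQ$. Since $D-x_i$ is an induced subdigraph and each path of $\sQ$ lies in the corresponding path of $\sP$, this set is stable in $D$ and orthogonal to $\sP$. In the example above it produces $\{x,z\}$.
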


One interesting question is if we require optimality on the stable set rather than the path partition, can we obtain a result similar to Theorem~\ref{th:gallai-milgram}? 

\begin{question}\label{question:alpha}
    Let $D$ be a digraph and let $T$ be a maximum stable set of $D$. Is it true that $D$ contains a path partition orthogonal to $T$?
\end{question}

Berge~\cite{Berge1982b} observed that the answer to this question is negative by exhibiting orientations of odd cycles that do not satisfy the desired property. See Figure~\ref{fig:anti-directed-c5}. Note that the set $\{v_3,v_5\}$ is a maximum stable set of the digraph $D$ depicted, but there is no orthogonal path partition to it.

\begin{figure}[htb]
    \centering
    \subfloat[\label{fig:anti-directed-c5}]{\includegraphics[height=5cm, width=5cm]{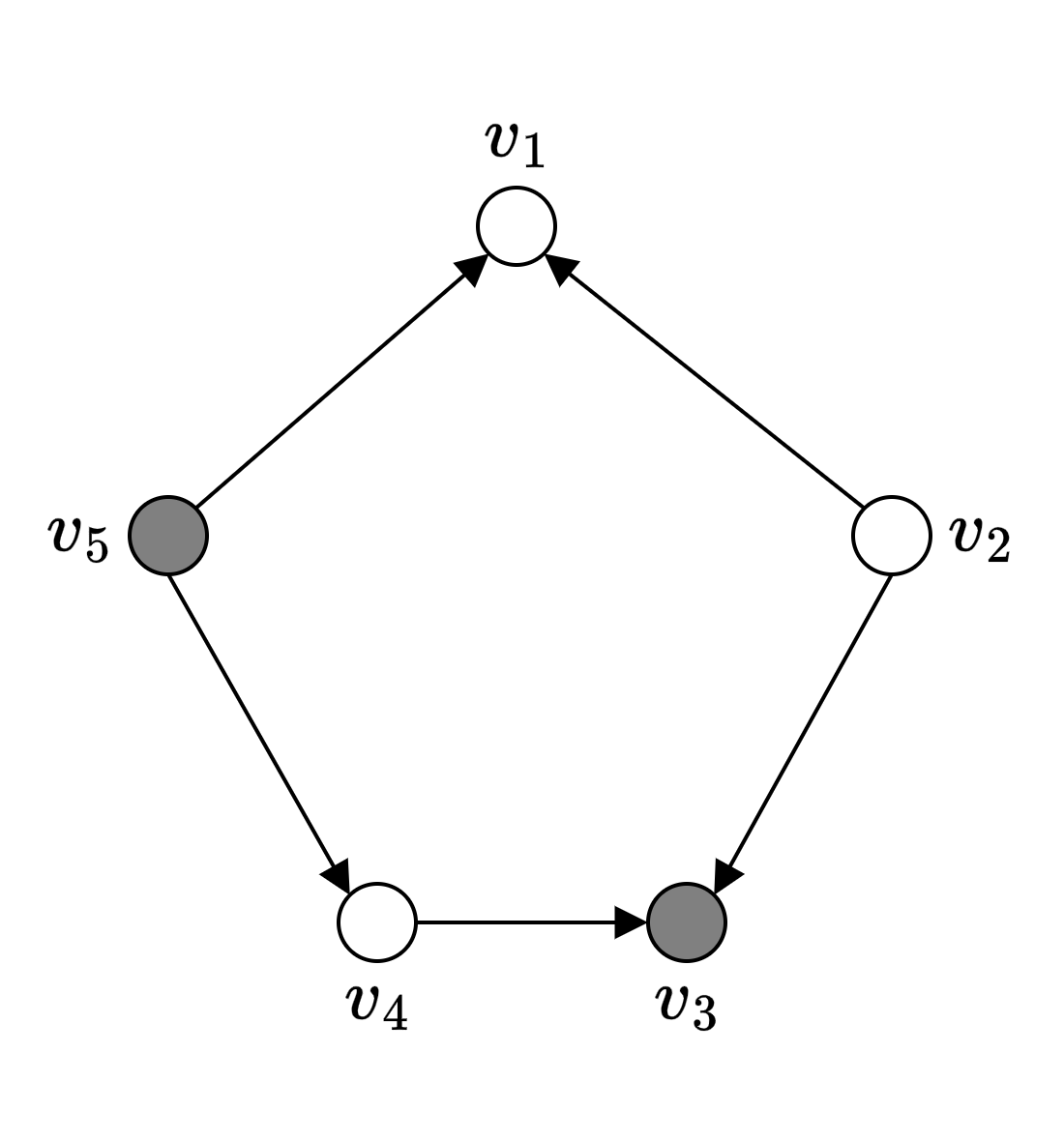}}
    \qquad
    \qquad
    \subfloat[\label{fig:conflicting-c5}]{\includegraphics[height=5cm, width=5cm]{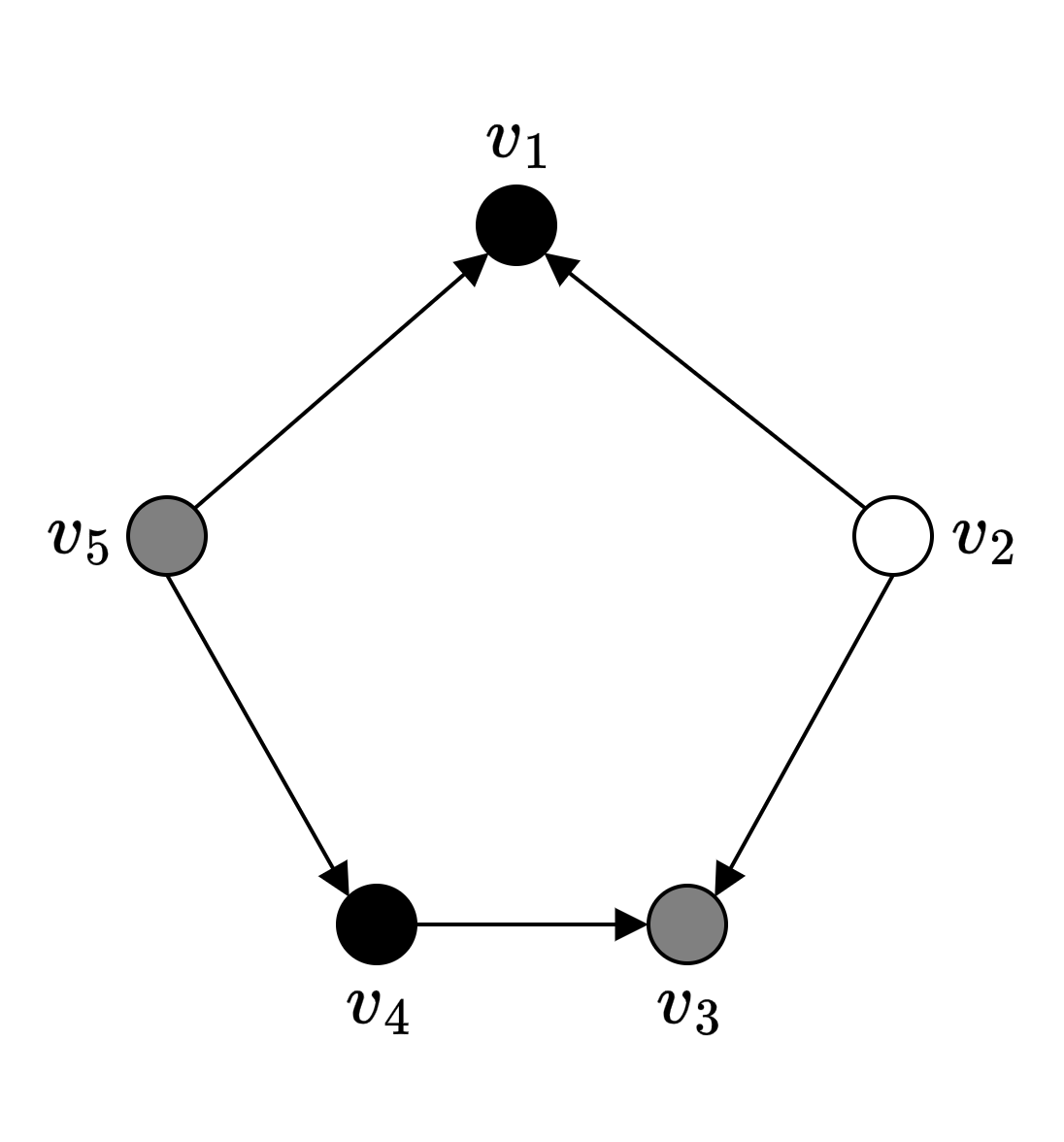}}
    \caption{Orientation $D$ of a $C_5$ that is a counterexample to both Question~\ref{question:alpha} and Question~\ref{question:chi}.}
    \label{fig:odd-cycles}
\end{figure} 

A \emph{coloring} of a digraph $D$ is a coloring of $U(D)$, that is, a partition of $V(D)$ into stable sets.
The \emph{chromatic number} of $D$, denoted by $\chi(D)$, is the chromatic number of $U(D)$. We denote by $\lambda(D)$ the order of a longest path of $D$.
We may obtain \emph{dual} results by exchanging the roles of stable sets and paths. In other words, we replace path partition by (vertex)-coloring and replace stable set by path. 

In the 1960s, a result analogous to the Gallai-Milgram Theorem was proved independently by Gallai~\cite{gallai1968directed}, Hasse~\cite{hasse1965algebraischen}, 
Roy~\cite{roy1967nombre} and 
Vitaver~\cite{vitaver1962determination}.

\begin{theorem}[Gallai-Hasse-Roy-Vitaver Theorem] \label{th:gallai-roy}
Let $D$ be a digraph. For every maximum path $P$ of $D$, there exists a coloring $S$ of $D$ such that $P$ and $S$ are orthogonal. In particular, $\chi(D) \leq \lambda(D)$.
\end{theorem}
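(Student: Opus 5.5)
The plan is the classical Gallai--Roy argument. Take a maximum acyclic spanning subdigraph, color each vertex by the order of the longest path ending at it in that subdigraph, and then show that the colors appear exactly once on the fixed longest path $P$ of $D$. Write $P=v_1v_2\cdots v_\ell$ with $\ell=\lambda(D)$.

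First, I choose a spanning subdigraph $D'$ of $D$ that is acyclic and, subject to this, has the maximum number of arcs. I also require that $D'$ contains all arcs of $P$; this is possible because $P$ is itself acyclic, so I can start from $P$ and greedily extend it. The key consequence of maximality is that for every arc $(u,v)\in A(D)\setminus A(D')$, adding it would create a directed cycle, so $D'$ contains a directed path from $v$ to $u$.

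Next, for each vertex $v$ let $c(v)$ be the number of vertices of a longest path of $D'$ ending at $v$. Then $1\le c(v)\le \lambda(D')\le \ell$. I check that each color class is stable in $D$ by taking an arc $(u,v)$ of $D$.
\begin{itemize}
\item If $(u,v)\in A(D')$, then since $D'$ is acyclic, a longest path ending at $u$ does not contain $v$. Appending $v$ gives $c(v)\ge c(u)+1$.
\item If $(u,v)\notin A(D')$, then $D'$ has a path $Q$ from $v$ to $u$. Take a longest path $R$ ending at $v$; by acyclicity $R$ and $Q$ meet only at $v$, so concatenating them gives $c(u)\ge c(v)+|Q|-1>c(v)$.
\end{itemize}
In either case $c(u)\ne c(v)$, so $c$ is a coloring of $D$ with colors in $\{1,\dots,\ell\}$.

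Finally, I show orthogonality: every color class contains exactly one vertex of $P$. Since $P\subseteq D'$ and $c$ increases strictly along arcs of $D'$, we get $c(v_1)<c(v_2)<\cdots<c(v_\ell)$. These are $\ell$ distinct values in $\{1,\dots,\ell\}$, so $c(v_i)=i$. Every color $1,\dots,\ell$ is used, and each appears on exactly one vertex of $P$, so the color classes form a coloring orthogonal to $P$ and $\chi(D)\le\lambda(D)$.

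The main subtlety is the non-arc case, where acyclicity of $D'$ is needed to glue $R$ and $Q$ into a path. The other delicate point is building $D'$ to contain $P$, which is what forces the color values along $P$ to be exactly $1,\dots,\ell$.
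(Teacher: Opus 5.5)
Your proof is correct. Note that the paper does not prove this theorem: it quotes it from the literature and uses it only as a black box, for instance in the proof of Theorem~\ref{th:yes-chi}, so there is no in-paper argument to compare with. What you give is the classical proof: take a maximal acyclic spanning subdigraph and color each vertex by the order of the longest path ending at it. You sharpen it by forcing $A(P)\subseteq A(D')$, and that is exactly what pins the colors on $P$ to $1,\dots,\ell$ and gives orthogonality rather than just $\chi(D)\le\lambda(D)$.

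One wording fix. $D'$ should be described as inclusion-maximal (or of maximum size) \emph{among acyclic spanning subdigraphs containing $A(P)$}, not as having the maximum number of arcs outright. An arc-maximum acyclic subdigraph of $D$ need not contain any longest path. For example, take the arc $ab$ together with the paths $b\to x_i\to a$ for $i=1,2$: every longest path uses $ab$, yet the unique arc-maximum acyclic subdigraph omits it. Your greedy extension from $P$ already gives the only property you use, namely that every omitted arc closes a directed cycle in $D'$.
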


Similarly as in the previous case, one interesting question is if we require optimality on the coloring rather than on the path, can we obtain a similar result? 

\begin{question}\label{question:chi}
    Let $D$ be a digraph and let $\sS$ be a minimum coloring of $D$. Is it true that $D$ contains a path $P$ orthogonal to $\sS$?
\end{question}

Berge~\cite{Berge1982b} observed that the answer to this question is also negative. See Figure~\ref{fig:conflicting-c5}. Note that $\{\{v_1, v_4\}, \{v_2\}, \{v_3, v_4\}\}$ is a minimum coloring of the digraph $D$ depicted, but there is no path orthogonal to it.

Motivated by the previous observations and by his study on perfect graphs, Berge~\cite{Berge1982b} introduced two classes of digraphs, named $\alpha$-diperfect and $\chi$-diperfect digraph. 
A digraph $D$ is \emph{$\alpha$-diperfect} if for every maximum stable set $S$ of $D$, there is a path partition orthogonal to $S$ and this property holds for every induced subdigraph of $D$.
A digraph $D$ is \emph{$\chi$-diperfect} if for every minimum coloring $\sS$ of $D$, there is a path $P$ orthogonal to $\sS$, and this property holds for every induced subdigraph of $D$. 
The ultimate goal for the classes of $\alpha$-diperfect digraph and $\chi$-diperfect digraphs is to obtain a characterization of these classes in terms of forbidden induced subdigraphs.
Some advance has been made towards this goal~\cite{chi-diperfect-2022, chi-diperfect-stab-two-2022, freitas2022-3anti, freitas2022some, sambinelli2022alpha, de2025acyclic}, and it seems that this is a very hard problem. Many examples of minimal non-$\alpha$-diperfect digraphs and minimal non-$\chi$-diperfect digraphs have been discovered and they do not seem to have an evident pattern~\cite{chi-diperfect-obstructions, de2023family, de2024diperfect}. 

In 1981, Linial~\cite{linial1981extending} proposed two generalizations of Gallai-Milgram Theorem and Gallai-Hasse-Roy-Vitaver Theorem that use a positive integer $k$ as a measure of optimality for the path partition and the coloring, respectively. These generalizations have led to two conjectures that remain open~\cite{berge1982k, hartman2006berge, tesemaycon2018}.  
Let $D$ be a digraph and let $k$ be a positive integer. The \emph{$k$-norm} of a path partition $\sP$ of $D$, denoted by $|\sP|_k$, is defined as $\sum_{P \in \sP} \min\{|V(P)|, k\}$. A path partition is \emph{$k$-minimum} if $|\sP|_k \leq |\sP'|_k$ for every path partition $\sP'$ of $D$. The $k$-norm of a $k$-minimum path partition is denoted by $\pi_k(D)$. A \emph{partial $k$-coloring} $\sS = \{S_1, \ldots, S_k\}$ is a collection of $k$ vertex-disjoint stable sets. The maximum number of vertices that can be covered by a partial $k$-coloring of $D$ is denoted by $\alpha_k(D)$. Observe that, for $k = 1$, a minimum $1$-path partition is simply a minimum path partition and a partial $1$-coloring is a stable set. By Gallai-Milgram Theorem, we have $\alpha_1(D) \geq \pi_1(D)$.
Linial~\cite{linial1981extending} conjectured that this inequality holds for every positive integer $k$.

\begin{conjecture}[Linial]\label{conj:linial-primal}
    For every digraph $D$ and every positive integer $k$, it follows that $\pi_k(D) \leq \alpha_k(D)$.
\end{conjecture}

The \emph{$k$-norm} of a coloring $\sS$ of $D$, denoted by $|\sS|_k$, is defined as $\sum_{S \in \sS} \min\{|S|, k\}$. A coloring is \emph{$k$-minimum} if $|\sS|_k \leq |\sS'|_k$ for every coloring $\sS'$ of $D$. The $k$-norm of a $k$-minimum coloring is denoted by $\chi_k(D)$.
A \emph{$k$-pack} $\sP = \{P_1, \ldots, P_k\}$ is a collection of $k$ disjoint paths. The maximum number of vertices that can be covered by a $k$-path of $D$ is denoted by $\lambda_k(D)$. Observe that, for $k = 1$, a $1$-minimum coloring is simply a minimum coloring and a $1$-pack is simply a path. By Gallai-Hasse-Roy-Vitaver Theorem, we have $\lambda_1(D) \geq \chi_1(D)$. Similarly as before, Linial~\cite{linial1981extending} conjectured that this inequality holds for every positive integer $k$. 

\begin{conjecture}[Linial]\label{conj:linial-dual}
    For every digraph $D$ and every positive integer $k$, it follows that $\chi_k(D) \leq \lambda_k(D)$.
\end{conjecture}

In 1982, Neumann-Lara~\cite{neumann1982dichromatic} introduced a directed analogue of the usual concept of coloring in a digraph. In this scenario, stable sets are replaced by (induced) acyclic digraphs. 
A digraph is \emph{acyclic} if it does not contain any directed cycles.
Let $D$ be a digraph. 
A \emph{dicoloring} of a digraph is a partition $S_1, \ldots, S_t$ of $V(D)$ such that, for every $i \in \{1, \ldots, t$\}, $D[S_i]$ is an acyclic digraph. The minimum $t$ for which $D$ admits a dicoloring of size $t$ is called the \emph{dichromatic number} of $D$ and it is denoted by $\chil(D)$. In the early 2000s, this notion was reintroduced by Mohar et al.~\cite{mohar2003circular, bokal2004circular} and since then many results analogous to classical results in graph theory have been proven using the notion of dicoloring~\cite{aboulker2021extension, picasarri2024strengthening, kawarabayashi2025analogue}. We refer the reader the PhD thesis of Aubian~\cite{aubian2023colouring} and Picassari-Arrieta~\cite{picasarri2024digraph} for a more complete overview of the topic.

Since every stable set corresponds to an acyclic digraph, Theorems~\ref{th:gallai-milgram} and~\ref{th:gallai-roy} are trivially true if we replace stable sets by induced acyclic subdigraph and, coloring by dicoloring, respectively. 
It is natural to ask, then, what happens if we make the same change in Questions~\ref{question:alpha} and~\ref{question:chi}.

\begin{question}\label{question:new-alpha}
    Let $D$ be a digraph and let $T$ be a maximum induced acyclic subdigraph of $D$. Is it true that $D$ contains a path partition orthogonal to $T$?
\end{question}

\begin{question}\label{question:new-chi}
    Let $D$ be a digraph and let $\sS$ be a minimum dicoloring of $D$. Is it true that $D$ contains a path $P$ orthogonal to $\sS$?
\end{question}

It turns out that this change makes both problems much simpler.
In this paper, we give affirmative answers for both questions.

We may define analogous versions of  partial $k$-colorings and $k$-minimum colorings by replacing stable sets by induced acyclic digraphs. 
Let $D$ be a digraph and let $k$ be a positive integer.
A \emph{partial $k$-dicoloring} $\sS = \{S_1, \ldots, S_k\}$ of $D$ is a collection of $k$ disjoint induced acyclic subdigraphs of $D$. The maximum number of vertices that can be covered by a partial $k$-dicoloring of $D$ is denoted by $\vec{\alpha}_k(D)$. 
Let $\sS$ be a dicoloring of $D$. The \emph{$k$-norm} of $\sS$, denoted by $|\sS|_k$, is defined as $\sum_{S \in \sS} \min\{|S|, k\}$. A dicoloring is \emph{$k$-minimum} if $|\sS|_k \leq |\sS'|_k$ for every dicoloring $\sS'$ of $D$. The $k$-norm of a $k$-minimum dicoloring of $D$ is denoted by $\vec{\chi}_k(D)$. Since every stable set induces an acyclic digraph, $\vec{\alpha}_k(D) \geq \alpha_k(D)$ and $\chi_k(D) \geq \vec{\chi_k}(D)$. 
We prove the following relaxations of Conjectures~\ref{conj:linial-primal} and ~\ref{conj:linial-dual}. 

\begin{theorem}\label{th:relax-linial}
    For every digraph $D$ and every positive integer $k$, $\pi_k(D) \leq \vec{\alpha}_k(D)$ and $\vec{\chi}_k(D) \leq \lambda_k(D)$.
\end{theorem}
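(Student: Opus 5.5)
The plan is to prove the two inequalities separately. In both, the freedom to use induced acyclic subdigraphs instead of stable sets should let us merge colour classes, or absorb uncovered vertices, which is impossible with stable sets.

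\emph{First inequality, $\pi_k(D)\le\vec{\alpha}_k(D)$.} Fix a maximum partial $k$-dicoloring $\sS=\{S_1,\dots,S_k\}$ and let $U=\bigcup_i S_i$ and $X=V(D)\setminus U$. By maximality, for every $x\in X$ and every $i$, the digraph $D[S_i\cup\{x\}]$ has a directed cycle through $x$. Equivalently, $S_i$ contains a path from an out-neighbour of $x$ to an in-neighbour of $x$. I aim to build a path partition $\sP$ in which every path $P$ satisfies $|V(P)\cap U|\ge \min\{|V(P)|,k\}$. Summing over $\sP$ then gives $|\sP|_k\le |U|=\vec{\alpha}_k(D)$.

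The construction starts from the trivial partition of $U$ into singletons and inserts the vertices of $X$ one at a time, by induction on $|X|$. To insert $x$ I use the cycle through $x$ inside $D[S_i\cup\{x\}]$ for a suitable $i$: this lets $x$ be threaded between an in-neighbour and an out-neighbour of $x$ in $S_i$, lengthening a path or joining two paths. Each such path already carries enough vertices of $U$ to pay for its $k$-norm. The step I expect to be the main obstacle is making these insertions compatible, since several vertices of $X$ may want the same arcs of $U$. I would first try a Gallai--Milgram-style induction, in which a path partition minimizing the $k$-norm is modified along initial vertices. If that fails, I would take $\sP$ $k$-minimum and, among those, one maximizing $|U\cap\bigcup_{P}V(P)\cap\{\text{first }k\text{ vertices of }P\}|$, and derive a contradiction by an exchange argument using the cycles above.

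\emph{Second inequality, $\vec{\chi}_k(D)\le\lambda_k(D)$.} Take a minimal feedback arc set $F$, so that $D'=D-F$ is acyclic and every $k$-pack of $D'$ is a $k$-pack of $D$; hence $\lambda_k(D')\le\lambda_k(D)$. As in the proof of Theorem~\ref{th:gallai-roy}, define $\ell(v)$ as the order of a longest path of $D'$ ending at $v$, and let $L_j=\{v:\ell(v)=j\}$. These classes are stable in $D'$, and by the minimality of $F$ each $L_j$ induces an acyclic subdigraph of $D$, so $\{L_j\}$ is a dicoloring of $D$.

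The remaining task is to compare $\sum_j\min\{|L_j|,k\}$ with $\lambda_k(D)$. I would do this by constructing $k$ disjoint paths greedily level by level in $D'$, in the spirit of Greene--Kleitman. If the levels alone do not suffice, I would exploit the acyclic relaxation: merge classes of the dicoloring whenever their union stays acyclic in $D$, which can only decrease the $k$-norm. The obstacle here is to justify that after merging, every remaining class of size $s$ can be charged $\min\{s,k\}$ distinct vertices of a single $k$-pack.
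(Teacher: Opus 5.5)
Neither inequality is actually proved. In both halves, the step you label as the obstacle carries the whole argument, and the key idea that resolves both is missing.

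\textbf{First inequality.} A cycle through $x$ in $D[S_i\cup\{x\}]$ only tells you that $x$ has an in-neighbour and an out-neighbour in $S_i$. It says nothing about where those neighbours sit on the current paths; you need them to be endpoints or consecutive. Moreover, single insertions cannot preserve your invariant. Appending $x$ to a singleton $\{u\}$ gives a path of order $2$ with one vertex of $U$, which violates $|V(P)\cap U|\ge\min\{|V(P)|,k\}$ for $k\ge 2$. The fallback exchange argument is never specified.

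The missing idea is Lemma~\ref{lem:directed-matching}. If $T_1$ is a maximum induced acyclic subdigraph and $T_2$ is acyclic in $D-V(T_1)$, Hall's theorem gives a directed matching from $V(T_1)$ into $V(T_2)$ covering $V(T_2)$. Indeed, a Hall violator $S\subseteq V(T_2)$ would make $S\cup(V(T_1)\setminus N(S))$ a larger acyclic set, because all arcs between the two parts go one way. Apply this along a greedy dicoloring $S_1,\dots,S_t$, where each $S_i$ is maximum acyclic in $D-\bigcup_{j<i}S_j$. This yields a path partition whose $i$-th vertices all lie in $S_i$. Hence exactly $|S_i|$ paths have order $\ge i$, and $|\sP|_k=\sum_{i\le k}|S_i|\le\vec{\alpha}_k(D)$. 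Note that you do not need a \emph{maximum} partial $k$-dicoloring at all; the greedy one, which may be smaller, suffices.

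\textbf{Second inequality.} The level approach cannot work by itself. With $F$ inclusion-minimal, every $(u,v)\in F$ closes a cycle with a $v$--$u$ path of $D-F$. So $\ell(u)>\ell(v)$, and each $L_j$ is in fact stable in $D$, not merely acyclic. A bound $\sum_j\min\{|L_j|,k\}\le\lambda_k(D)$ would therefore prove Conjecture~\ref{conj:linial-dual} itself.

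It also fails on a small example. Take $p_1\to p_2\to p_3$ with extra arcs $p_1\to q_2$ and $p_2\to q_3$; this is acyclic, so $F=\emptyset$. The levels $\{p_1\},\{p_2,q_2\},\{p_3,q_3\}$ have $2$-norm $5$. But $\lambda_2=4$, since the three sinks $q_2,q_3,p_3$ must end distinct paths. So everything rests on your merging step, for which you give no charging scheme.

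The paper again uses the greedy dicoloring $\sS$ and its good path partition $\sP$. Since $|S_1|\ge|S_2|\ge\cdots$ and exactly $|S_i|$ paths of $\sP$ have order $\ge i$, the $k$ longest paths of $\sP$ meet each $S_i$ in $\min\{|S_i|,k\}$ vertices. They therefore cover $|\sS|_k\ge\vec{\chi}_k(D)$ vertices. One structure gives both inequalities by counting.
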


\section{Answering Questions~\ref{question:new-alpha} and~\ref{question:new-chi}}

Before we present our results, we need some auxiliary concepts.
Let $G$ be a graph, let $X \subseteq V(G)$, and let $M$ be a matching of $G$. We say that $M$ \emph{covers} $X$ if every vertex of $X$ is incident to some edge of $M$.
If $S \subseteq V(G)$, we denote by $N_G(S)$ the set of neighbors of $S$ in $G$. When $G$ is clear from the context, we write simply $N(S)$.
The following theorem is a classical result in Graph Theory, due to Hall~\cite{hall1987representatives}.

\begin{theorem}[Hall's Theorem]\label{th:hall}
    A bipartite graph $G[X, Y]$ has a matching that covers $X$ if and only if
$|N(S)| \geq |S|$ for all $S \subseteq X$.
\end{theorem}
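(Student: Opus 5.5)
The statement is Hall's Theorem, and the plan is to prove it directly, by induction on $|X|$, instead of invoking anything from the paper. Necessity is immediate. If $M$ is a matching covering $X$, then for every $S \subseteq X$ the edges of $M$ incident to $S$ have pairwise distinct ends in $Y$. These ends all lie in $N(S)$, so $|N(S)| \geq |S|$.

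For sufficiency, assume $|N(S)| \geq |S|$ for all $S \subseteq X$ and induct on $|X|$. The case $|X| \le 1$ is trivial: if $X=\{x\}$ then $|N(\{x\})|\ge 1$, so any single edge at $x$ works. For the inductive step, split into two cases according to whether Hall's condition holds with slack.

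\emph{Case 1:} $|N(S)| \geq |S|+1$ for every nonempty proper subset $S \subsetneq X$. Pick any $x \in X$ and any neighbour $y$ of $x$, which exists since $|N(\{x\})|\ge 1$. Delete $x$ and $y$ to obtain $G'[X-x, Y-y]$. For every $S \subseteq X - x$ we have $|N_{G'}(S)| \geq |N_G(S)|-1 \geq |S|$, using the slack when $S\neq\emptyset$. By induction $G'$ has a matching covering $X-x$, and adding $xy$ gives the desired matching.

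\emph{Case 2:} some nonempty proper subset $S_0 \subsetneq X$ is tight, that is, $|N(S_0)|=|S_0|$. First apply induction to $G[S_0, N(S_0)]$. Hall's condition holds there because the neighbourhoods of subsets of $S_0$ lie entirely in $N(S_0)$. This gives a matching $M_1$ covering $S_0$. Next consider $G''=G[X\setminus S_0, Y\setminus N(S_0)]$. For $T \subseteq X\setminus S_0$ we have
\[
|N_{G''}(T)| = |N_G(T\cup S_0)| - |N_G(S_0)| \geq |T|+|S_0|-|S_0| = |T|.
\]
Here the equality holds because $N_G(T\cup S_0)$ is the disjoint union of $N_G(S_0)$ and $N_G(T)\setminus N_G(S_0)=N_{G''}(T)$. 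By induction $G''$ has a matching $M_2$ covering $X \setminus S_0$. The union $M_1\cup M_2$ is a matching, since the two use disjoint vertex sets, and it covers $X$.

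The only step requiring care is the computation in Case 2. It verifies that Hall's condition survives the removal of a tight set together with its neighbourhood, which is exactly where tightness $|N(S_0)|=|S_0|$ is used. The rest is routine bookkeeping for the induction, whose hypotheses apply since both $S_0$ and $X\setminus S_0$ are strictly smaller than $X$.
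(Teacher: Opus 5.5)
Your proof is correct. It is the standard Halmos--Vaughan induction, and every step holds:
\begin{itemize}
\item Given Hall's condition, each nonempty proper subset either has slack or is tight, so your case split is exhaustive.
\item In Case~1, a nonempty $S \subseteq X - x$ is automatically a proper subset of $X$, so the slack applies; the case $S=\emptyset$ is trivial.
\item In Case~2, the identity $|N_{G''}(T)| = |N_G(T\cup S_0)| - |N_G(S_0)|$ holds because $N_G(T\cup S_0)$ is the disjoint union of $N_G(S_0)$ and $N_G(T)\setminus N_G(S_0)$. Also $|T \cup S_0| = |T| + |S_0|$, since $T$ and $S_0$ are disjoint.
\end{itemize}

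The paper does not prove Theorem~\ref{th:hall} at all; it cites it as a classical result, so there is no competing argument to compare against. It uses only the sufficiency direction, in contrapositive form, in the proof of Lemma~\ref{lem:directed-matching}. There, the absence of a matching covering $X_2$ gives a deficient set $S\subseteq X_2$ with $|S|>|N_G(S)|$, and hence the larger acyclic set $S\cup(X_1\setminus N_G(S))$. The citation buys brevity. Your induction makes the paper self-contained exactly where it matters, since the direction you prove by induction is the one the paper invokes.
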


If $D$ is a digraph and $X_1, X_2 \subseteq V(D)$ are disjoint sets of $D$, then we denote by $D[X_1, X_2]$ the bipartite subdigraph of $D$ with vertex-set $X_1 \cup X_2$ and arc-set containing all arcs of $D$ between $X_1$ and $X_2$.
A \emph{matching} of a digraph is a matching of its underlying graph.
We say that $M \subseteq A(D)$ is a \emph{directed matching} in $D[X_1, X_2]$ if $M$ is a matching in $U(D[X_1, X_2])$ and all arcs of $M$ are directed from $X_1$ to $X_2$. Moreover, we say that $M$ \emph{covers} $X_1$ (respec., \emph{covers} $X_2$) if $M$ covers $X_1$ (respec., covers $X_2$) in $U(D[X_1, X_2])$. 

\begin{lemma}\label{lem:directed-matching}
    Let $D$ be a digraph and let $T_1$ be a maximum induced acyclic subdigraph of $D$. If $T_2$ is an induced acyclic subdigraph of $D - V(T_1)$, then there exists a directed matching covering $V(T_2)$ in $D[V(T_1), V(T_2)]$. 
\end{lemma}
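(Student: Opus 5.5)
We will apply Hall's Theorem (Theorem~\ref{th:hall}) to the bipartite graph $G$ with parts $X = V(T_2)$ and $Y = V(T_1)$. In $G$, a vertex $x \in X$ is adjacent to $y \in Y$ exactly when $(y,x) \in A(D)$, that is, when the arc goes from $T_1$ to $T_2$. A matching of $G$ covering $X$ is then precisely a directed matching in $D[V(T_1),V(T_2)]$ covering $V(T_2)$, with all arcs directed from $X_1 = V(T_1)$ to $X_2 = V(T_2)$. So it suffices to verify Hall's condition: for every $S \subseteq V(T_2)$, we need $|N_G(S)| \geq |S|$. Here $N_G(S)$ is the set of vertices of $T_1$ that have an out-neighbour in $S$.

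We argue by contradiction using the maximality of $T_1$. Suppose some $S \subseteq V(T_2)$ has $|N_G(S)| < |S|$. Consider the vertex set $W = (V(T_1) \setminus N_G(S)) \cup S$. Since $S \subseteq V(T_2)$ is disjoint from $V(T_1)$, we get $|W| = |V(T_1)| - |N_G(S)| + |S| > |V(T_1)|$. It therefore suffices to show that $D[W]$ is acyclic, which contradicts the choice of $T_1$ as a maximum induced acyclic subdigraph.

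The key step is the acyclicity of $D[W]$. Suppose $C$ is a directed cycle in $D[W]$. The cycle $C$ cannot lie entirely in $V(T_1) \setminus N_G(S)$, since $T_1$ is acyclic. It cannot lie entirely in $S$, since $S \subseteq V(T_2)$ and $T_2$ is induced and acyclic, so $D[S]$ is acyclic. Hence $C$ uses vertices from both parts, and some arc of $C$ enters $S$ from outside $S$. Such an arc has the form $(y,x)$ with $x \in S$ and $y \in V(T_1) \setminus N_G(S)$. But then $y$ has an out-neighbour in $S$, so $y \in N_G(S)$, a contradiction. Thus $D[W]$ is acyclic, Hall's condition holds, and the required directed matching exists.

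The only delicate point is choosing the orientation of $G$ correctly, with neighbours being the in-neighbours of $S$ in $T_1$. This choice is exactly what ensures that no arc enters $S$ from the retained part of $T_1$, so every cycle would be trapped in one side. Beyond that, the argument is routine.
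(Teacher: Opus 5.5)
Your proof is correct and follows essentially the same route as the paper: apply Hall's Theorem to the bipartite graph of arcs from $V(T_1)$ to $V(T_2)$, and show that a Hall violator $S$ yields the larger acyclic set $S \cup (V(T_1)\setminus N_G(S))$, contradicting the maximality of $T_1$. You also justify why that set induces an acyclic subdigraph (no arc enters $S$ from the retained part of $T_1$), which the paper only asserts.
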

\begin{proof}
    Let $X_1 = V(T_1)$ and let $X_2 = V(T_2)$.
    Let $D'$ be the subdigraph of $D$ with vertex set $X_1\cup X_2$ and containing only the arcs of $D$ that are directed from $X_1$ to $X_2$. 
    Let $G = G[X_1, X_2] = U(D')$. Note that if $G[X_1, X_2]$ has a matching that covers $X_2$, then there is a directed matching in $D[X_1, X_2]$ that covers $X_2$ and the result follows. Towards a contradiction, assume that such a matching does not exist. Then, by Hall's Theorem, there exists a subset $S \subseteq X_2$ such that $|S| > |N_G(S)|$. Also note that $S\cup (X_1\setminus N_G(S))$ induces an acyclic subdigraph of $D$. However, $|S\cup (X_1\setminus N_G(S))|  > |X_1| = |V(T_1)|$, a contradiction.
\end{proof}

A \emph{greedy dicoloring} $\sS = \{S_1, \ldots, S_t\}$ of $D$ is a dicoloring such that $S_1$ is a maximum induced acyclic subdigraph of $D$ and $S_i$ is a maximum induced acyclic subdigraph of $D - \bigcup_{j = 1}^{i-1} S_j$, for every $i \in \{2, \ldots, t\}$. A \emph{good path partition} with respect to a greedy dicoloring $\sS = \{S_1, \ldots, S_t\}$ of $D$ is a path partition $\sP$ of $D$ such that for every path $(v_1, \ldots, v_\ell) \in \sP$ and every $i \in \{1, \ldots, \ell\}$, it follows that $v_i \in S_i$.

\begin{lemma}\label{lem:good-path-part}
    Let $D$ be a digraph and let $\sS=\{S_1, \ldots, S_t\}$ be a greedy dicoloring of $D$. Then there is a good path partition with respect to $\sS$.
\end{lemma}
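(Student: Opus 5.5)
Since a good path partition requires every path $(v_1,\ldots,v_\ell)$ to satisfy $v_i\in S_i$, each path starts in $S_1$ and then visits $S_2, S_3,\ldots$ in order. So I will build the paths layer by layer, chaining together the directed matchings supplied by Lemma~\ref{lem:directed-matching}. For each $i\in\{1,\ldots,t-1\}$ I apply Lemma~\ref{lem:directed-matching} to the digraph $D_i = D - \bigcup_{j=1}^{i-1} S_j$, with $T_1 = D_i[S_i]$ and $T_2 = D_i[S_{i+1}]$. The hypotheses hold by the definition of a greedy dicoloring: $S_i$ is a maximum induced acyclic subdigraph of $D_i$, and $S_{i+1}$ induces an acyclic subdigraph of $D_i - S_i$. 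The lemma then yields a directed matching $M_i$ in $D[S_i,S_{i+1}]$ that covers $S_{i+1}$ and whose arcs all go from $S_i$ to $S_{i+1}$.

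Next I take the spanning subdigraph $H$ of $D$ with arc set $M_1\cup\cdots\cup M_{t-1}$ and check its degrees. A vertex $v\in S_{i+1}$ with $i\ge 1$ has exactly one in-arc in $H$, namely its $M_i$-arc, since $M_i$ is a matching covering $S_{i+1}$ and no other $M_j$ has arcs entering $S_{i+1}$. A vertex of $S_1$ has in-degree $0$. A vertex $v\in S_i$ has out-degree at most $1$, coming only from $M_i$. Moreover every arc of $H$ goes from some $S_i$ to $S_{i+1}$, so $H$ is acyclic. A digraph in which every vertex has in- and out-degree at most one and which has no cycles is a disjoint union of paths covering all vertices, so the components of $H$ form a path partition $\sP$ of $D$.

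Finally I verify that $\sP$ is good. Every path starts at a vertex of in-degree $0$ in $H$, and the only such vertices lie in $S_1$ because every vertex of $S_{i+1}$ has an in-arc. Consecutive vertices of a path go from $S_i$ to $S_{i+1}$, so by induction the $j$-th vertex lies in $S_j$. The only real work is checking that the hypotheses of Lemma~\ref{lem:directed-matching} transfer to the subdigraph $D_i$, which is immediate from the greedy construction. I do not expect a genuine obstacle.
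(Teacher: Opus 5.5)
Your proof is correct and is essentially the paper's argument. The paper inducts on $t$: since $\{S_2,\ldots,S_t\}$ is a greedy dicoloring of $D-S_1$, it applies Lemma~\ref{lem:directed-matching} to $S_1,S_2$ and prepends vertices of $S_1$ to the paths of a good partition of $D-S_1$, which is your construction with the induction unrolled into the union of the layer matchings $M_1,\ldots,M_{t-1}$ (your version additionally uses the standard fact, correctly stated, that an acyclic digraph with all in- and out-degrees at most one is a disjoint union of directed paths).
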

\begin{proof}
    The proof follows by induction on $|V(D)|$. If $t = 1$, then the path partition consisting of $|V(D)|$ trivial paths satisfies the desired property. So we may assume that $t \geq 2$. By induction hypothesis, there exists a good path partition $\sP'$ of $D - S_1$ with respect to $\{S_1'= S_2, \ldots, S'_{t-1}= S_t\}$. Note that, by definition of a good path partition, for every $P \in \sP'$, $\ini(P) \in S'_1 = S_2$.
    
    By Lemma~\ref{lem:directed-matching}, there exists a directed matching $M$ covering $S_2$ in $D[S_1, S_2]$. Let $\sQ$ be the set of trivial paths consisting of every vertex of $S_1$ that is not covered by $M$. Let $\sR$ be the set of disjoint paths obtained from $\sP'$ by extending each path $P \in \sP'$ with the arc of $M$ incident to $\ini(P)$. Then $\sP = \sR \cup \sQ$ is a good path partition of $D$ with respect to $\sS$. This finishes the proof.
\end{proof}

Lemma~\ref{lem:good-path-part} immediately gives an affirmative answer to Question~\ref{question:new-alpha}.

\begin{corollary}\label{th:yes-alpha}
    Let $D$ be a digraph and let $T$ be a maximum induced acyclic subdigraph of $D$. There exists a path partition $\sP$ of $D$ orthogonal to $T$. 
\end{corollary}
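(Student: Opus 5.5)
The plan is to extend $T$ to a greedy dicoloring of $D$ whose first class is exactly $V(T)$, and then read off the orthogonal path partition from Lemma~\ref{lem:good-path-part}.

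\textbf{Step 1: build a greedy dicoloring starting with $T$.} Set $S_1 = V(T)$. Since $T$ is a maximum induced acyclic subdigraph of $D$, this is a valid first class. While $D - \bigcup_{j<i} S_j$ is nonempty, let $S_i$ be the vertex set of a maximum induced acyclic subdigraph of that remaining digraph. Each step removes at least one vertex, because a single vertex is acyclic. So the process stops after some $t \le |V(D)|$ steps. The result $\sS = \{S_1, \ldots, S_t\}$ is a partition of $V(D)$ into sets inducing acyclic subdigraphs, and by construction it is a greedy dicoloring with $S_1 = V(T)$.

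\textbf{Step 2: apply the lemma.} By Lemma~\ref{lem:good-path-part} there is a good path partition $\sP$ of $D$ with respect to $\sS$. For any path $(v_1, \ldots, v_\ell) \in \sP$, goodness gives $v_i \in S_i$ for every $i \in \{1, \ldots, \ell\}$.

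\textbf{Step 3: check orthogonality.} Take a path $(v_1, \ldots, v_\ell) \in \sP$. Its first vertex satisfies $v_1 \in S_1 = V(T)$. For $i \ge 2$ we have $v_i \in S_i$, which is disjoint from $S_1$, so $v_i \notin V(T)$. Hence each path of $\sP$ contains exactly one vertex of $T$, and $\sP$ is orthogonal to $T$.

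There is no real obstacle here. The only point to verify is that greedy dicolorings may begin with an arbitrary prescribed maximum induced acyclic subdigraph. This holds because the definition places no restriction on which maximum subdigraph is chosen at each step.
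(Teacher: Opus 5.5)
Your proof is correct and is exactly the argument the paper intends when it says Lemma~\ref{lem:good-path-part} ``immediately'' gives the corollary. You extend $V(T)$ to a greedy dicoloring with $S_1 = V(T)$, take a good path partition, and observe that each path meets $S_1$ only in its initial vertex; your one addition is to state explicitly what the paper leaves implicit, namely that the first class of a greedy dicoloring may be any prescribed maximum induced acyclic subdigraph.
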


Now we prove Theorem~\ref{th:yes-chi}
which gives an affirmative answer to Question~\ref{question:new-chi}.

\begin{theorem}\label{th:yes-chi}
    Let $D$ be a digraph and let $\sS$ be a minimum dicoloring of $D$. Then there exists a path $P$ of $D$ orthogonal to $\sS$.
\end{theorem}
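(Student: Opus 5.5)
We prove Theorem~\ref{th:yes-chi} by adapting the Gallai--Hasse--Roy--Vitaver argument, using the extra freedom that dicoloring classes only need to induce acyclic subdigraphs. Let $\sS=\{S_1,\ldots,S_t\}$ be a minimum dicoloring, so $t=\chil(D)$. The idea is to modify $\sS$, without changing the number of classes, into a dicoloring where every vertex of $S_i$ ($i\ge 2$) has an in-neighbour in $S_{i-1}$. A vertex in $S_t$ then yields a path $v_1v_2\cdots v_t$ with $v_i\in S_i$ obtained by walking backwards. The difficulty is that the final path must be orthogonal to the \emph{original} $\sS$, not to a modified one. So instead of recolouring, we argue directly with an auxiliary structure on the fixed partition $\sS$.

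We work with the fixed $\sS$ and build the path inductively, choosing the order of the classes. Call a sequence of distinct classes $S_{i_1},\ldots,S_{i_r}$ together with a path $v_1\cdots v_r$, $v_j\in S_{i_j}$, a \emph{chain}. We want a chain using all $t$ classes.

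The key claim is a minimality argument. Suppose no chain uses all classes. Pick a nonempty class, say $S_1$, and let $R_1=S_1$. Define $R_2$ as the set of vertices $v$ outside $S_1$ such that some chain ends at $v$ and uses only classes in which\ldots{} This bookkeeping gets messy, so we use a cleaner approach instead. Fix a linear order $S_1,\ldots,S_t$. For each vertex $v$, let $\ell(v)$ be the largest $r$ such that there is a path $u_1\cdots u_r=v$ with $u_j\in S_j$, and $v\in S_r$. If $S_t$ contains a vertex with $\ell=t$, we are done. Otherwise, we recolour: put each vertex $v\in S_i$ into class $\ell'(v)$, the largest $j\le i$ such that $v$ is reached by a path $u_1\cdots u_j=v$ with $u_s\in S_s$ for $s<j$. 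We then check that each new class induces an acyclic subdigraph, using the following observation. Each new class $S'_j$ consists of vertices of classes $S_i$ with $i\ge j$ that have no in-neighbour among the ``reachable'' vertices of $S_j$. A cycle inside $S'_j$ must therefore lie entirely inside a single old class, because otherwise it would contain an arc from a reachable vertex that increases the level. Since every old class induces an acyclic subdigraph, no such cycle exists.

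If $S'_t$ is empty, this gives a dicoloring with fewer classes, contradicting minimality. The main obstacle is exactly this acyclicity verification. Cycles could mix vertices of different old classes placed in the same new level, and ruling them out is the crux. If the recolouring argument fails there, we would fall back to the same template used with an acyclic-maximality trick as in Lemma~\ref{lem:directed-matching}. In that variant we choose, among all minimum dicolorings, one that lexicographically maximises the class sizes, which makes it greedy. Lemma~\ref{lem:directed-matching} then gives arcs from $S_{i-1}$ into every vertex of $S_i$, and a path is obtained backwards from $S_t$. Finally, we transfer the result back to an arbitrary minimum $\sS$ by an exchange argument.
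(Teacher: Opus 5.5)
Your main route has a genuine gap at the step you flagged: the claimed acyclicity of the new classes is in fact false.

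Because $\ell'$ only counts paths running through $S_1,S_2,\ldots,S_{j-1}$ consecutively, an arc $(x,y)$ raises the level of $y$ only when $x$ itself lies in the old class $S_j$. Vertices of $S'_j$ that come from old classes $S_i$ with $i>j$ can be joined by arbitrary arcs without any level change. So the sentence ``otherwise it would contain an arc from a reachable vertex that increases the level'' does not follow.

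Concretely, let $S_1=\{x\}$, $S_2=\{a,c\}$, $S_3=\{b\}$, with arcs $a\to b$, $b\to c$, $c\to a$; every class is acyclic. No vertex has an in-neighbour in $S_1$, so $\ell'\equiv 1$ and $S'_1=V(D)$ contains the directed triangle $abc$. Your argument never uses minimality of $\sS$, so it would declare this $S'_1$ acyclic.

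The fallback does not repair this, for two reasons.
\begin{itemize}
\item A lexicographically maximal minimum dicoloring need not be greedy, because greedy dicolorings can need more than $\chil(D)$ classes. Join the centres of two copies of $K_{1,3}$ and replace every edge by a digon. The unique maximum acyclic set is the six leaves, and what remains is a digon, so every greedy dicoloring has $3$ classes while $\chil=2$.
\item A path orthogonal to some other minimum dicoloring says nothing about the given $\sS$, and the promised exchange argument is never supplied.
\end{itemize}

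The missing idea is the right level function. Let $F$ be the set of arcs from $S_i$ to $S_j$ with $i<j$. Take the level of $v$ to be the order of a longest path ending at $v$ that uses only arcs of $F$, so classes may be skipped.
\begin{itemize}
\item Every arc of $F$ strictly raises this level. Hence inside a level set all arcs stay within one class or go to a lower class.
\item Such arcs can only close a cycle inside a single class, so every level set is acyclic.
\item A path of order $t$ in $(V(D),F)$ has strictly increasing class indices, so it meets each $S_i$ exactly once.
\item If no such path exists, the at most $t-1$ level sets form a smaller dicoloring, contradicting minimality.
\end{itemize}
This is the paper's proof, phrased there as applying Gallai--Hasse--Roy--Vitaver to $D'=(V(D),F)$ and noting that $D-F$ is acyclic. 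It works for your arbitrary fixed order, so your overall template (fix an order, level, recolour) was sound; only $\ell'$ was wrong.
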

\begin{proof} 
Let $t = \chil(D)$ and let $\sS=\{S_1, \ldots, S_t\}$ be a minimum dicoloring of $D$. Let $F=\{(u,v)\in A(D): u\in S_i, v\in S_j, i<j\}$.  Note that since each $D[S_i]$ is acyclic, $D-F$ is also acyclic. Consider the subdigraph $D'=(V(D),F)$.
Suppose first that $D'$ has a path $P$ of order $t$. Then clearly $P$ is a path orthogonal to $\sS$. So we may assume that the longest path of $D'$ has order at most $t - 1$. By Gallai-Hasse-Roy-Vitaver Theorem, $\chi(D') \leq t - 1$. Let $\sC=\{C_1, \ldots, C_{\chi(D')}\}$ be a minimum coloring of $D'$. 
Since each color class $C_j$ is a stable set in $D'$, the arc set of $D[C_j]$ contains no arc of $F$ and, so $D[C_j]$ is a subdigraph of $D-F$.
This implies that $D[C_j]$ is acyclic. Hence, $\{C_1, \ldots, C_{\chi(D')}\}$ is a dicoloring of $D$ of size smaller than $t$, a contradiction.
\end{proof}

\section{Linial's Conjectures}
\label{ap:linial}

In this section, we use the ideas of greedy dicoloring and good path partition to prove Theorem~\ref{th:relax-linial}

\begin{proof}[Proof of Theorem~\ref{th:relax-linial}]
    Let $\sS = \{S_1, \ldots, S_t\}$ be a greedy dicoloring of $D$ and let $\sP$ be a good path partition of $D$ with respect to $\sS$ (it exists by Lemma~\ref{lem:good-path-part}). 

    Let us first show that $\vec{\alpha}_k(D) \geq \pi_k(D)$.
    Note that if $t \leq k$, then $\vec{\alpha}_k(D) = |V(D)|$ and the result is trivial. So we may assume that $t > k$.
    Observe that $\vec{\alpha}_k(D) \geq |\bigcup_{i=1}^k S_i|$.
    Let $\sP_1$ be the set of paths of $\sP$ of order at most $k - 1$ and let $\sP_2$ be the set of paths of $\sP$ of order at least $k$. Note that, by definition of a good path partition, 
    every vertex of $S_k, \ldots, S_t$ is covered by $\sP_2$. Moreover, exactly $|S_k|$ vertices of each $S_i$ with $i \in \{1, \ldots, k-1\}$ are covered by $\sP_2$. This implies that $|\sP_2|=|S_k|$ and $\sP_1$ covers exactly $|\bigcup_{i=1}^{k-1} S_i| - (k-1)|S_k|$ vertices. So the $k$-norm of $\sP$ is $|V(\sP_1)| + k|\sP_2| = |\bigcup_{i=1}^{k-1} S_i| - (k-1)|S_k| + k|S_k| = |\bigcup_{i=1}^{k} S_i|$. Hence,
    \[ \pi_k(D) \leq |\sP|_k = |\bigcup_{i=1}^{k} S_i| \leq \vec{\alpha}_k(D).  \]

    Let us now show that $\lambda_k(D) \geq \vec{\chi}_k(D)$. If $|\sP|\leq k$, then $\lambda_k(D)=|V(D)|$ and the result is clear. So we may assume that $|\sP| > k$.
        Let $\sP'$ be a set of  $k$ longest paths of $\sP$. By definition of a good path partition, every $S_i$ such that $|S_i| \leq k$ is covered by $\sP'$. Since $\sS$ is a greedy dicoloring, $|S_{i-1}| \ge |S_i|$ for $i\in \{2,\ldots t\}$. If for every $i \in \{1,\ldots,t\}$, $|S_i| \leq k$, then $|\sP|\leq k$ which contradicts our assumption. So, let $j \in \{1,\ldots,k\}$ be the maximum integer such that $|S_j| > k$. Hence, $\lambda_k(D) \geq |V(\sP')| =  kj + |\bigcup_{i = {j+1}}^t S_i|$.
    Now the $k$-norm of $\sS$ is $kj + |\bigcup_{i = {j+1}}^t S_i|$. Hence,

    \[
    \vec{\chi}_k(D) \leq |\sS|_k = kj + |\bigcup_{i = {j+1}}^t S_i| \leq \lambda_k(D).
    \]

    This finishes the proof.
\end{proof}

\bibliographystyle{abbrv} 
\bibliography{bibliography} 

\end{document}